\tikzstyle{vertex}=[circle, draw, fill=black, inner sep=0pt, minimum size=6pt]
\tikzstyle{redvert}=[circle, draw, gray, fill=gray,  inner sep=0pt, minimum size=6pt]
\definecolor{indigo}{rgb}{0.29, 0.0, 0.51}
\newcommand*\bigcdot{\mathpalette\bigcdot@{0.6}}
\newcommand*\bigcdot@[2]{\mathbin{\vcenter{\hbox{\scalebox{#2}{$\m@th#1\bullet$}}}}}
\newtheorem{theorem}{Theorem}
\newtheorem{obs}[theorem]{Observation}
\newtheorem{conjecture}[theorem]{Conjecture}
\theoremstyle{definition}
\newtheorem{definition}[theorem]{Definition}
\theoremstyle{remark}
\numberwithin{theorem}{section}
\theoremstyle{plain}
\newcommand{\vertex}{\node[vertex]}
\newcommand{\redvert}{\node[redvert]}
\begin{document}

\title{On Elser's conjecture and the topology of $U$-nucleus complex}

\author{Apratim Chakraborty}

\author{Anupam Mondal}

\author{Sajal Mukherjee}

\author{Kuldeep Saha}

\email{apratim.chakraborty@tcgcrest.org}

\email{anupam.mondal@tcgcrest.org}

\email{sajal.mukherjee@tcgcrest.org}

\email{kuldeep.saha@tcgcrest.org}

\address{TCG CREST, Sector V, Bidhannagar, Kolkata, West Bengal, India 700091}

\maketitle

\begin{abstract}	

Dorpalen-Barry et al. proved Elser's conjecture about signs of Elser's numbers by interpreting them as certain sums of reduced Euler characteristic of an abstract simplicial complex known as  $U$-nucleus complex. We prove a conjecture posed by them regarding the homology of $U$-nucleus complex.  
 
\end{abstract}

\section{Introduction}

Given a connected undirected simple graph $G=(V(G),E(G))$ with at least $3$ vertices, a nucleus of $G$ is a connected subgraph $N \subseteq G$ such that $V(N)$ is a vertex cover of $G$. The set of nuclei is denoted by $\mathcal{N}(G)$ \cite{nucleus}. The $k$-th Elser number $\operatorname{els}_k(G)$ is defined as follows

\begin{align*}
	\operatorname{els}_k(G)=(-1)^{|V(G)|+1} \sum_{N \in \mathcal{N}(G)} (-1)^{|E(N)|} |{V(N)}|^{k}.
\end{align*}

Elser's conjecture \cite{Elser} says $\operatorname{els}_k(G) \ge 0 $ for $k \ge 2$, $\operatorname{els}_1(G) = 0 $ and $\operatorname{els}_0(G) \le 0 $.\\ 

An abstract simplicial complex is a (finite, nonempty) family of finite sets that is closed under taking subsets. Note that the empty set always belongs to the family. Given an abstract simplicial complex $\Sigma$, the \emph{reduced Euler characteristic}, $\tilde{\chi}( \Sigma):= \sum_{\sigma \in \Sigma} {(-1)}^{|\sigma|-1}$. So, it follows that $\tilde{\chi}( \Sigma) = \chi( \Sigma) - 1$, where  $\chi( \Sigma)$ denotes the usual Euler characteristic of $\Sigma$.\\ 

This conjecture was proved in \cite{nucleus}. That proof made use of an abstract simplicial complex called the $U$-nucleus complex $\Delta^{G}_{U}$ as defined below.

\[ 
\text{For } U \subseteq V(G), \Delta^{G}_{U}= \{ E(G) \setminus E(N): N \in \mathcal{N}(G), \ U \subseteq V(N) \}. 
\]	

The $k$-th Elser number, $\operatorname{els}_k(G)$ can be seen as a weighted sum of reduced Euler characteristic of $\Delta^{G}_{U}$ as below.

\begin{align*}
	\operatorname{els}_k(G)=(-1)^{|E(G)|+|V(G)|} \sum_{U \subseteq V(G)} \operatorname{Sur}(k,|U|) \ \tilde{\chi}(\Delta^{G}_{U})
\end{align*}

where, $\operatorname{Sur}(a,b)$ denotes the number of surjections from a set of size $a$ to a set of size $b$. Dorpalen-Barry et al. proved Elser's conjecture by establishing analogous inequalities for the reduced Euler characteristic $\tilde{\chi}(\Delta^{G}_{U})$.  They posed a general conjecture (Conjecture 9.1 in \cite{nucleus}) about the reduced homology of these complexes that implies the results about the reduced Euler characteristic as a corollary. We recall that the $k$-th reduced homology of a complex $\Sigma$, denoted by $\tilde{H}_k(\Sigma;\mathbb R)$, is same as the usual simplicial homology of $\Sigma$ for all $k \ge 1$ and the zeroth simplicial homology of $\Sigma$ is $\tilde{H}_0(\Sigma;\mathbb R) \oplus \mathbb{R}$. Now, we state the conjecture mentioned above.

\begin{conjecture} \cite{nucleus}\label{conj}
Let $G$ be a connected graph and $U \subseteq V(G)$. Then the reduced homology
group $\tilde{H}_k(\Delta^{G}_{U}; \mathbb{R})$ is nonzero only if (i) $U=\emptyset$ and $k =|E(G)|-|V (G)|+1$, or (ii) $|U|>1$ and $k=|E(G)|-|V (G)|$. \footnote{ We note that part (i) of Conjecture~\ref{conj} originally stated  that $\tilde{H}_k(\Delta^{G}_{U}; \mathbb{R})$ is nonzero for $U= \emptyset$ only if $k =|E(G)|-|V (G)|-1$ (Conjecture~9.1 in \cite{nucleus}). However, the $n$-cycle graph provides a counterexample in that case. Through personal communication, the authors have informed us that there was an error and they intended $k$ to be $|E(G)|-|V (G)|+1$. So, we have stated the corrected version of the conjecture as communicated by the authors.}
	
\end{conjecture}

We prove the following theorem about homology of $\Delta^{G}_{U}$ settling the conjecture in the case of $|{U}|>1$. 

\begin{theorem} \label{maintheorem}
For $|U|>1$, $\tilde{H}_{k}(\Delta^{G}_{U};\mathbb{R}) \cong 0$ unless $k=|E(G)|-|V(G)|$. 

\end{theorem}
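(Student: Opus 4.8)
The plan is to reduce the statement to a matroidal base case and then induct on the number of vertices of $G$ with an inner induction on $|V(G)\setminus U|$, using the long exact sequence of a pair. First I would record that $\Delta^G_U$ really is a simplicial complex: since $V(N)$ is a vertex cover, every edge of $G$ has an endpoint in $V(N)$, so enlarging the edge set of a nucleus keeps it connected, and the defining collection is downward closed. A face is then a set $F$ of deleted edges for which the retained graph $E(G)\setminus F$ has a single connected component that both contains $U$ and is a vertex cover of $G$; the vertices $I(F)$ isolated in $E(G)\setminus F$ form an independent set disjoint from $U$. The subcomplex $\{F:I(F)=\emptyset\}$ is exactly the independence complex of the cographic matroid of $G$, which is pure shellable of dimension $d:=|E(G)|-|V(G)|$ and hence has reduced homology concentrated in degree $d$. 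This subcomplex is $\Delta^G_V$, which serves as the base case $U=V$.

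For the inductive step I would pick $u\in V(G)\setminus U$ and use the pair $(\Delta^G_U,\Delta^G_{U\cup\{u\}})$, noting that $\Delta^G_{U\cup\{u\}}\subseteq\Delta^G_U$ is precisely the subcomplex of faces in which $u$ is not isolated (if $u$ is non-isolated it must lie in the covering component, since the complement of a covering component is always isolated). The relative chains are generated by the faces with $u$ isolated, i.e. those of the form $\partial u\sqcup F'$, where $\partial u$ is the set of edges at $u$; writing $U'=U\cup N_G(u)$ one checks that $E(G)\setminus(\partial u\sqcup F')$ is a $U$-nucleus avoiding $u$ exactly when $F'\in\Delta^{G-u}_{U'}$, giving $H_k(\Delta^G_U,\Delta^G_{U\cup\{u\}})\cong\tilde H_{k-\deg u}(\Delta^{G-u}_{U'})$. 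If $u$ is a cut vertex, then $N_G(u)\subseteq U'$ meets more than one component of $G-u$, the relative complex is void, and the inclusion is a homology isomorphism, so that case is immediate. Otherwise $G-u$ is connected, $|U'|\ge|U|>1$, and the induction on $|V(G)|$ gives $\tilde H_*(\Delta^{G-u}_{U'})$ concentrated in degree $|E(G-u)|-|V(G-u)|=d+1-\deg u$; after the shift the relative homology is concentrated in degree $d+1$.

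Feeding this into the long exact sequence, with $\tilde H_*(\Delta^G_{U\cup\{u\}})$ concentrated in degree $d$ by the inner induction and the relative term concentrated in degree $d+1$, one reads off at once that $\tilde H_k(\Delta^G_U)=0$ for every $k\notin\{d,d+1\}$. The hypothesis $|U|>1$ is used only to keep the recursion inside the range $|U'|\ge|U|>1$, which is what forces the relative term into a single degree; for $|U|\le 1$ the degree bookkeeping shifts and the conclusion genuinely changes. Everything thus comes down to the single degree $k=d+1$, where exactness gives $\tilde H_{d+1}(\Delta^G_U)=\ker\big(\partial\colon H_{d+1}(\Delta^G_U,\Delta^G_{U\cup\{u\}})\to\tilde H_d(\Delta^G_{U\cup\{u\}})\big)$.

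The main obstacle is therefore proving that this connecting homomorphism is injective. Concretely, $\partial$ sends a top cycle $z'$ of $\Delta^{G-u}_{U'}$, coned over the deleted edges $\partial u$, to the genuine boundary obtained by re-attaching $u$ through a single edge, and injectivity asserts that these re-attached classes remain linearly independent in $\tilde H_d(\Delta^G_{U\cup\{u\}})$. I would attack it by exhibiting an explicit chain-level splitting---a ``cone with $u$'' map $\tilde H_d(\Delta^G_{U\cup\{u\}})\to H_{d+1}(\Delta^G_U,\Delta^G_{U\cup\{u\}})$ retracting $\partial$---or equivalently by analysing the decomposition $\Delta^G_U=\Delta^G_{U\cup\{u\}}\cup B$, with $B$ the contractible cone generated by the faces containing $\partial u$, and showing that $\Delta^G_{U\cup\{u\}}\cap B\hookrightarrow\Delta^G_{U\cup\{u\}}$ is injective on degree-$d$ homology. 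Establishing this splitting is the crux; once it is in place the induction closes and $\tilde H_*(\Delta^G_U)$ is concentrated in degree $d=|E(G)|-|V(G)|$.
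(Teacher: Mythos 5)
Your reduction is sound as far as it goes, and it takes a genuinely different route from the paper (which works on the Alexander dual $\mathcal{A}_U=\cup_{x\in U}\mathcal{A}_x$, extends Grinberg's acyclic matching to one on all of $\mathcal{A}_U$ whose critical cells lie only in dimension $|V(G)|-3$, and then applies Alexander duality). Your structural identifications are correct: $\Delta^G_{U\cup\{u\}}\subseteq\Delta^G_U$ is the subcomplex of faces in which $u$ is not isolated; the relative chain complex is the reduced chain complex of $\Delta^{G-u}_{U\cup N_G(u)}$ shifted by $\deg u$; the cut-vertex case degenerates as you say; and the base case $\Delta^G_{V(G)}$ is the independence complex of the cographic matroid, shellable with homology concentrated in degree $d=|E(G)|-|V(G)|$. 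The long exact sequence of the pair then kills $\tilde{H}_k(\Delta^G_U)$ for all $k\notin\{d,d+1\}$.

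But the proof is not complete: the vanishing in degree $d+1$, equivalently the injectivity of the connecting map $\partial\colon H_{d+1}(\Delta^G_U,\Delta^G_{U\cup\{u\}})\to\tilde{H}_d(\Delta^G_{U\cup\{u\}})$, is exactly the remaining content of the theorem, and you leave it unproved. There is no dimension bound to fall back on ($\Delta^G_U$ typically has dimension well above $d+1$), and the injectivity is not a formal consequence of your setup: since $\tilde{H}_{d+1}(\Delta^G_U)=\ker\partial$, it is equivalent to the very statement you are inducting toward, so the induction cannot close by itself. What your argument actually establishes is the weaker claim that homology is concentrated in degrees $\{d,d+1\}$, and this weakness propagates through every step of the inner induction, since each step leaves degree $d+1$ untouched. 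The remedies you sketch (a chain-level splitting of $\partial$, or injectivity of $\tilde{H}_d(\Delta^G_{U\cup\{u\}}\cap B)\to\tilde{H}_d(\Delta^G_{U\cup\{u\}})$ for the cone $B$ over the faces containing $\partial u$) are reformulations of the same unproved claim; no construction is given, and it is not apparent that such a retraction exists. This is the genuine gap: the ``crux'' you name is the theorem itself, and filling it requires a new idea, such as the paper's explicit acyclic matching on the Alexander dual with all critical cells in a single dimension.
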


Our approach to prove Theorem~\ref{maintheorem} is inspired by Grinberg's proof of Elser's conjecture for the case $k=1$ (i.e., $\operatorname{els}_1(G)=0$) using discrete Morse theory \cite{Grinberg}. For $F \subseteq E(G)$, an $F$-path is a path consisting of edges from $F$. Now given a vertex $v\in V(G)$, we define $\operatorname{Shade}_v (F)$ as follows. 

\[ \operatorname{Shade}_v (F)=\{ e \in E(G) : \text{ There is an } F \text{-path from an endpoint of } e \text { to } v \}. \]

We consider the following abstract simplicial complex as defined in \cite{Grinberg} 

\[ \mathcal{A}_U = \{ F \subseteq E(G)  :  \operatorname{Shade}_v(F) \subsetneq E(G) \text{ for some } v \in U \}. \]

It follows that $\mathcal{A}_U $ is the Alexander dual of $\Delta^{G}_{U}$ \cite{Grinberg}. Grinberg showed that $\mathcal{A}_U $  is collapsible when $|U|=1$ by producing an acyclic matching (also known as gradient vector field) with no unmatched simplices. By Alexander duality, it follows that in the case of $|U|=1$, for all $i$, $\tilde{H}_i(\Delta^{G}_{U}) \cong \tilde{H}^{|E(G)|-i-3}(\mathcal{A}_U) \cong \tilde{H}_{|E(G)|-i-3}(\mathcal{A}_U) \cong 0  $. Therefore, $\Delta^{G}_{U}$ has homology of a point. \\

We observe that $\mathcal{A}_U = \cup_{x \in U} \mathcal{A}_x$.  We extend Grinberg's matching to $\mathcal{A}_U $ when $|U|>1$ and show that $\mathcal{A}_U $ can be given an acyclic matching  with unmatched simplices in the dimension $k=|V(G)|-3$. Again by Alexander's duality we conclude that, $\Delta^{G}_{U}$ has homology concentrated in a single homological degree $|E(G)|-|V(G)|$.\\

\textbf{Acknowledgments} The authors would like to thank Goutam Mukherjee for helpful discussions.

\section{Preliminaries}

\subsection{Graph theoretic notation}

\begin{itemize}

\item A (simple undirected) graph $G$ is an ordered pair $(V(G), E(G))$, where $V(G)$ is a finite set and $E(G) \subseteq \{ S \subseteq V(G): \ |S|=2 \}$. Elements of $V(G)$ are called vertices and elements of $E(G)$ are called edges. \\

\item We call a vertex $y$ a \emph{neighbor} of $x$ in $G$ if $x$ and $y$ are adjacent, i.e.,  $\{ x,y \} \in E(G)$. \\

\item A \emph{leaf} in $G$ is a vertex with only one neighbor.\\

\item A \emph{leaf-edge} denotes an edge which is incident to (i.e., contains) a leaf.\\

\item For a connected graph $G$, an edge $e$ is called a \emph{bridge} if the graph obtained after deleting $e$ is disconnected.\\

\item  \emph{Edge-induced subgraph}: Let  $\sigma \subseteq E(G)$. We define the subgraph induced by $\sigma$, denoted by $G_\sigma$ as follows
 \[ V(G_{\sigma})= \{ x \in V(G) : x \in e \text{ for some } e \in \sigma \} \text{ and }
E(G_{\sigma})= \sigma.\]

We use $V_{\sigma}$ to denote  $V(G_{\sigma})$ for brevity.\\

\item \emph{Vertex cover (of edges)} of $G$: $X \subseteq V(G) \text{ such that for all } e \in E(G), e\cap X \neq \emptyset$.


\end{itemize}

\subsection{Basics of discrete Morse theory}

In the case of smooth Morse theory, existence of a Morse function is equivalent to the existence of a gradient vector field. Similarly, existence of a \emph{discrete Morse function} on a simplicial complex is equivalent to the existence of a \emph{discrete gradient vector field}. First we recall the notion of a \emph{discrete vector field} or \emph{matching} on a simplicial complex. Let $K$ be a simplicial complex. 

\begin{definition}[Discrete vector field / matching]
	
	A discrete vector field $V$ on $K$ is a collection of pairs $\{\alpha^{(p)} < \beta^{(p+1)}\}$ of simplices of $K$ such that each simplex is in at most one pair of $V$.
	
\end{definition}   

Pictorially, given a discrete vector field on $K$, we assign arrows on $K$ such that for a pair $\{\alpha^{(p)} < \beta^{(p+1)}\}$ the head of the arrow lies in $\beta^{(p+1)}$ and the tail of the arrow lies in $\alpha^{(p)}$.  A gradient vector field is a discrete vector field with some special properties about this arrows.

\begin{definition}[Gradient vector field / acyclic matching]
	A discrete vector field is called a gradient vector field if given any simplex $\alpha$ in $K$, it satisfies exactly one the following.

	\begin{enumerate}
		\item $\alpha$ is the tail of exactly one arrow.
		
		\item $\alpha$ is the head of exactly one arrow.
		
		\item $\alpha$ is neither the head nor the tail of an arrow.
	\end{enumerate}

\end{definition}

\begin{figure}[htbp] 
	\centering
	\def\svgwidth{8cm}
\begingroup%
  \makeatletter%
  \providecommand\color[2][]{%
    \errmessage{(Inkscape) Color is used for the text in Inkscape, but the package 'color.sty' is not loaded}%
    \renewcommand\color[2][]{}%
  }%
  \providecommand\transparent[1]{%
    \errmessage{(Inkscape) Transparency is used (non-zero) for the text in Inkscape, but the package 'transparent.sty' is not loaded}%
    \renewcommand\transparent[1]{}%
  }%
  \providecommand\rotatebox[2]{#2}%
  \newcommand*\fsize{\dimexpr\f@size pt\relax}%
  \newcommand*\lineheight[1]{\fontsize{\fsize}{#1\fsize}\selectfont}%
  \ifx\svgwidth\undefined%
    \setlength{\unitlength}{249.35078961bp}%
    \ifx\svgscale\undefined%
      \relax%
    \else%
      \setlength{\unitlength}{\unitlength * \real{\svgscale}}%
    \fi%
  \else%
    \setlength{\unitlength}{\svgwidth}%
  \fi%
  \global\let\svgwidth\undefined%
  \global\let\svgscale\undefined%
  \makeatother%
  \begin{picture}(1,0.43610854)%
    \lineheight{1}%
    \setlength\tabcolsep{0pt}%
    \put(0,0){\includegraphics[width=\unitlength,page=1]{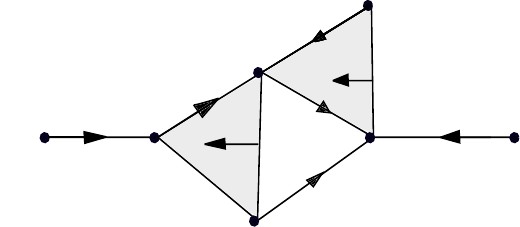}}%
    \put(-0.00272575,0.16318388){\color[rgb]{0,0,0}\makebox(0,0)[lt]{\lineheight{1.25}\smash{\begin{tabular}[t]{l}$v_1$\end{tabular}}}}%
    \put(0.25192556,0.12731002){\color[rgb]{0,0,0}\makebox(0,0)[lt]{\lineheight{1.25}\smash{\begin{tabular}[t]{l}$v_2$\end{tabular}}}}%
    \put(0.51276938,0.14709817){\color[rgb]{0,0,0}\makebox(0,0)[lt]{\lineheight{1.25}\smash{\begin{tabular}[t]{l}$e_1$\end{tabular}}}}%
    \put(0.36316409,0.06140012){\color[rgb]{0,0,0}\makebox(0,0)[lt]{\lineheight{1.25}\smash{\begin{tabular}[t]{l}$e_2$\end{tabular}}}}%
  \end{picture}%
\endgroup%

	\caption{An example of a discrete gradient vector field.}\label{gradfield}
\end{figure}

According to this notion of discrete Morse functions in terms of discrete gradient fields, a simplex is \emph{critical} if and only if it is neither the tail nor the head of any arrow. For instance, in Figure \ref{gradfield}, $e_2$ is a critical $1$-simplex while $e_1$ is not critical. The criteria when a discrete vector field is a gradient vector field is more straight forward once we have the notion of a \emph{$V$-path}.

\begin{definition}
	Given a discrete vector field $V$ on a simplicial complex $K$, a $V$-path is a sequence of simplices $$\alpha_0^{(p)}, \beta_0^{(p+1)}, \alpha_1^{(p)}, \beta_1^{(p+1)},\cdots,\beta_r^{(p+1)}, \alpha_{r+1}^{(p)}$$ such that for each $i \in \{0,1, \cdots, r\}$, $\{\alpha_i < \beta_i \} \in V$ and $\beta_i > \alpha_{i+1} \neq \alpha_i$.

\end{definition}

\noindent 

We say a path is a non-trivial \emph{closed path} if $r \geq 0$ and $\alpha_0 = \alpha_{r+1}$. It can be shown that a gradient vector field does not have a non-trivial closed $V$-path (i.e., a cycle). Moreover, the other direction is also true.

\begin{theorem}[Forman \cite{Forman}] \label{v-path criteria}
	A discrete vector field $V$ is the gradient vector field of a discrete Morse function if and only if there are no non-trivial closed $V$-path.
\end{theorem}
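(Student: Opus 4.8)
The statement is a biconditional, so the plan is to prove the two implications separately after fixing conventions. Recall that a discrete Morse function on $K$ is a real-valued function $f$ on the simplices for which every simplex $\alpha^{(p)}$ has at most one coface $\beta^{(p+1)} > \alpha$ with $f(\beta) \le f(\alpha)$ and at most one face $\gamma^{(p-1)} < \alpha$ with $f(\gamma) \ge f(\alpha)$; its induced gradient vector field $V_f$ consists precisely of the pairs $\{\alpha < \beta\}$ with $f(\alpha) \ge f(\beta)$, and the two Morse conditions guarantee that this collection is a discrete vector field. Saying that $V$ ``is the gradient vector field of a discrete Morse function'' means $V = V_f$ for some such $f$.

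For the forward implication I would assume $V = V_f$ and show that $f$ decreases strictly along every $V$-path. Along a $V$-path $\alpha_0^{(p)}, \beta_0^{(p+1)}, \alpha_1^{(p)}, \dots, \beta_r^{(p+1)}, \alpha_{r+1}^{(p)}$, each matched pair gives $f(\alpha_i) \ge f(\beta_i)$ by definition of $V_f$, while the relation $\beta_i > \alpha_{i+1} \ne \alpha_i$ forces $\{\alpha_{i+1} < \beta_i\} \notin V$ (since $\beta_i$ is already matched with $\alpha_i$), whence $f(\alpha_{i+1}) < f(\beta_i)$ by the Morse condition. Concatenating these,
\[
f(\alpha_0) \ge f(\beta_0) > f(\alpha_1) \ge \cdots > f(\alpha_{r+1}),
\]
so $f(\alpha_0) > f(\alpha_{r+1})$, and in particular $\alpha_0 \ne \alpha_{r+1}$: no non-trivial closed $V$-path can exist.

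For the converse I would build a discrete Morse function out of an acyclic $V$. First form the modified Hasse diagram $H_V$ on the simplices of $K$, orienting each matched incidence upward as $\alpha \to \beta$ for $\{\alpha < \beta\} \in V$ and every unmatched covering relation downward as $\delta \to \gamma$ for $\gamma \lessdot \delta$. The key lemma is that the absence of closed $V$-paths makes $H_V$ acyclic even across several dimensions. To prove it, take any directed cycle and list its up-edges cyclically as $\alpha_i \to \beta_i$; because each simplex lies in at most one pair, no two up-edges are consecutive, so the downward run from $\beta_i$ to the next lower element $\alpha_{i+1}$ is non-empty and yields $\alpha_{i+1} < \beta_i$, whence $\dim \alpha_{i+1} \le \dim \alpha_i$ for all $i$ cyclically. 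Summing these inequalities around the cycle forces each to be an equality, so every downward run has length one and the cycle is an alternating closed $V$-path, contradicting the hypothesis. With $H_V$ now a directed acyclic graph, I would take any value assignment $f$ that strictly decreases along every edge of $H_V$ (a topological sort provides one); then matched incidences satisfy $f(\alpha) > f(\beta)$ and unmatched ones satisfy $f(\gamma) < f(\delta)$, so $V_f = V$, while both Morse inequalities hold at each simplex because it meets at most one matched edge.

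The main obstacle is this acyclicity lemma. The $V$-paths in the statement are confined to two consecutive dimensions, whereas the sought Morse function must order all simplices of $K$ coherently, so a directed cycle in $H_V$ could a priori meander through many dimensions; the dimension-counting argument above, which relies essentially on $V$ being a matching, is exactly what collapses any such hypothetical cycle back to a two-dimensional closed $V$-path. Once the lemma is in hand, defining $f$ along a topological sort and verifying the Morse inequalities are routine.
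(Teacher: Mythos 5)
The paper does not actually prove this statement; it is quoted as background with a citation to Forman, so there is no in-paper argument to compare against, and your proposal must be judged against the source's proof. Your argument is correct. The forward direction --- showing $f$ strictly decreases along any $V$-path because the matched step gives $f(\alpha_i) \ge f(\beta_i)$ while the Morse condition at $\beta_i$ (at most one face with $f$-value $\ge f(\beta_i)$, and $\alpha_i$ already is one) forces $f(\alpha_{i+1}) < f(\beta_i)$ --- is exactly Forman's. For the converse you take the modified-Hasse-diagram route: orient matched covering relations upward and unmatched ones downward, prove the resulting digraph is acyclic, and read $f$ off a linear extension. This is the now-standard reformulation going back to Chari, and it is genuinely different in presentation from Forman's original proof, which constructs the Morse function directly by an induction over skeleta; your version is cleaner and makes the matching/poset combinatorics explicit. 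Your key lemma --- any directed cycle must alternate, since no two up-edges are consecutive and the cyclic dimension inequalities $\dim \alpha_{i+1} \le \dim \alpha_i$ force equality, collapsing every downward run to length one and producing a closed $V$-path in two consecutive dimensions --- is the right crux and is argued correctly.

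Two small points to tighten. First, the dimension count should explicitly dispose of a hypothetical cycle with no up-edges at all (an all-downward cycle strictly decreases dimension at every step, so it cannot close up); relatedly, the $V$-path condition $\alpha_{i+1} \ne \alpha_i$ comes for free because the covering relation between $\alpha_i$ and $\beta_i$ is oriented upward and hence unavailable as a down-edge, which is worth a sentence. Second, your parenthetical claim that ``the two Morse conditions guarantee that this collection is a discrete vector field'' is not immediate: the conditions give at most one exceptional coface and at most one exceptional face at each simplex, but ruling out a simplex being simultaneously the head of one pair and the tail of another requires Forman's separate lemma using the diamond property of simplicial complexes (between $\gamma^{(p-1)}$ and $\beta^{(p+1)}$ there are exactly two $p$-simplices). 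Since the theorem as stated in the paper already hypothesizes that $V$ is a discrete vector field, your main argument never actually uses this claim, so the gloss is harmless --- but as written it asserts more than the two conditions deliver.
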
 

In other words we say a discrete vector field $V$ is gradient vector field if the corresponding matching is acyclic (i.e., it contains no closed $V$-path).\\
 
\noindent The notion of gradient vector field is closely related to the notion of \emph{simplicial collapse}. A simplex $\alpha$ is called a face of a simplex $\beta$ if $\alpha \subsetneq \beta$. A simplex which is a face of exactly one simplex is called a \emph{free face}. For example, in Figure \ref{gradfield}, $v_1$, $e_1$ and $e_2$ are free faces while $v_2$ is not. Whenever we have a free face $\alpha$ of a simplex $\beta$ in a simplicial complex $K$, we remove $\alpha$ and $\beta$ (but keeping faces of $\beta$ other than $\alpha$ unperturbed) from $K$  by a deformation retraction. This is known as \emph{elementary collapse}. If $K \setminus \{ \alpha, \beta\}$ is obtained from $K$ by a elementary collapse, we extend any gradient vector field of $V$ of $K \setminus \{ \alpha, \beta\}$ to $K$ by adding the pair $(\alpha,\beta) $ to $V$.

\begin{figure}
	\def\svgwidth{12cm}
	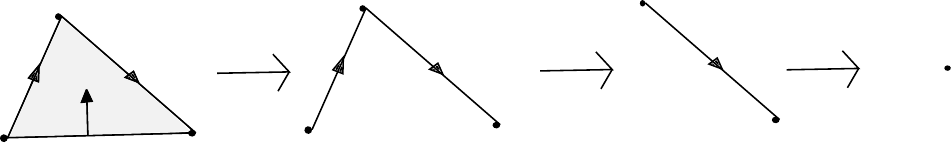
	\caption{A sequence of elementary collapses.} \label{collapse}
\end{figure}

If one can go from a simplicial complex $K_1$ to another simplicial complex $K_2$ via a sequence of elementary collapses, then we say $K_1$ collapses to $K_2$. If a gradient vector field on a simplicial complex $K$ does not have any critical faces of dimension greater than $0$, then $K$ collapses to a point.  We also say $K$ is collapsible. \\

Now, we state the fundamental theorem of discrete Morse theory.

\begin{theorem}[Forman \cite{Forman}]  \label{forth}
Suppose $K$ is a simplicial complex with a discrete Morse function. Then $K$ is homotopy equivalent to a CW complex with exactly one cell of dimension $p$ for each critical simplex of dimension $p$.
\end{theorem}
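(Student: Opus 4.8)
The plan is to build $K$ up through its level subcomplexes, mimicking the sublevel-set argument of smooth Morse theory. Passing from the discrete Morse function $f$ to its associated gradient vector field $V$ (an acyclic matching, by Theorem~\ref{v-path criteria}), I would record the basic structural fact that every simplex of $K$ is either critical or belongs to exactly one matched pair $\{\alpha^{(p)} < \beta^{(p+1)}\}$. For $c \in \mathbb{R}$ I define the level subcomplex $K(c) = \bigcup_{f(\alpha)\le c} \{\gamma : \gamma \subseteq \alpha\}$. The first task is to verify that each $K(c)$ is genuinely a subcomplex of $K$ and that, as $c$ grows, the newly appearing simplices split cleanly into critical simplices together with complete matched pairs $\{\alpha \lessdot \beta\}$.

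First I would prove the collapsing lemma: if the half-open interval $(a,b]$ contains no critical value of $f$, then $K(b)$ collapses onto $K(a)$, and hence $K(b) \simeq K(a)$. The simplices of $K(b)\setminus K(a)$ occur in matched pairs $\{\alpha \lessdot \beta\}$ with $\alpha$ a codimension-one face of $\beta$, and I would argue that within $K(b)\setminus K(a)$ each such $\alpha$ is a free face of $\beta$, so that the pair can be removed by an elementary collapse. The delicate point is to perform these collapses in an admissible order, and this is exactly where the acyclicity of the matching (no non-trivial closed $V$-path, Theorem~\ref{v-path criteria}) is used: it lets me linearly order the matched pairs so that at each stage the required face is genuinely free.

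Next is the cell-attachment lemma: if $(a,b]$ contains exactly one critical value, realized by a single critical $p$-simplex $\sigma$ and no other critical simplex, then $K(b)$ is homotopy equivalent to $K(a)$ with one $p$-cell attached. After collapsing away all the matched pairs of $K(b)\setminus K(a)$ as in the previous step, what remains is $K(a)$ together with the lone simplex $\sigma$; since $\sigma$ is critical, its boundary $\partial\sigma$ already lies in $K(a)$, so adjoining $\sigma$ is precisely the attachment of a $p$-cell along the inclusion $\partial\sigma \hookrightarrow K(a)$.

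Finally I would assemble these steps by induction. Listing the finitely many values of $f$ as $c_1 < \cdots < c_N$ and increasing $c$ from below, I alternate the two lemmas across successive intervals: an interval free of critical values preserves homotopy type, while each critical $p$-simplex contributes exactly one $p$-cell. Starting from the empty complex and ending at $K$ itself, this exhibits $K$ as homotopy equivalent to a CW complex with one $p$-cell for each critical $p$-simplex, which is the assertion. I expect the main obstacle to be the collapsing lemma, specifically verifying the free-face property inside the level subcomplex and extracting an admissible collapsing order from acyclicity; the cell-attachment step and the inductive assembly are then comparatively formal.
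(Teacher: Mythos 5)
The paper does not prove this statement: it is quoted verbatim as Forman's fundamental theorem of discrete Morse theory, with \cite{Forman} serving as the proof, so there is no internal argument to compare yours against. Your sketch is in fact a faithful reconstruction of Forman's original proof (his Theorems on level subcomplexes): define $K(c)$, prove the collapsing lemma for intervals without critical values, prove the cell-attachment lemma for an interval containing a single critical simplex, and induct over the finitely many values of $f$. Two small refinements you would need in a full write-up. First, the admissible collapsing order does not need to be extracted from acyclicity via Theorem~\ref{v-path criteria}; in Forman's setting one simply processes the matched pairs of $K(b)\setminus K(a)$ in order of decreasing $f$-value, and the discrete Morse inequalities ($f(\alpha)\ge f(\beta)$ for a matched pair $\alpha<\beta$, strict inequalities elsewhere) directly guarantee the free-face property at each stage. (Your acyclicity route also works and is the standard one when starting from a matching rather than a function, as in Chari's formulation, but it is a detour here since you already have $f$.) Second, your claim that $\partial\sigma$ ``already lies in $K(a)$'' for the critical $p$-simplex $\sigma$ is only literally true if you choose the interval tightly, e.g.\ $a=f(\sigma)-\epsilon$, $b=f(\sigma)$ with $\epsilon$ small: criticality forces every facet of $\sigma$ to have strictly smaller $f$-value, but faces could a priori have values in $(a,f(\sigma))$ for a loosely chosen $a$; in general the attaching map of the $p$-cell is $\partial\sigma$ composed with the collapse retraction onto $K(a)$. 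With these adjustments your outline is correct and is essentially the argument Forman gives in the cited source.
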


We also need the following crucial fact from discrete Morse theory which follows from the theorem above.

\begin{theorem}[Sphere theorem, \cite{J}] If $K$ has only critical faces of dimension $d$ ($\geq 1$), then $K$ is homotopy equivalent to a wedge of $d$-spheres. 
	
\end{theorem}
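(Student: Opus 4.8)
The plan is to deduce the statement from the fundamental theorem of discrete Morse theory (Theorem~\ref{forth}) and then to read off the homotopy type directly from the resulting cell structure. The point is that a discrete Morse function whose critical cells all lie in dimension $d$ produces a CW complex concentrated in a single positive dimension, and such a complex is transparently a wedge of spheres once one inspects its attaching maps.

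First I would invoke Theorem~\ref{forth}: since $K$ carries a discrete Morse function, it is homotopy equivalent to a CW complex $X$ with exactly one $p$-cell for each critical $p$-simplex of $K$. Let $m$ denote the number of critical $d$-simplices. A nonempty complex always has at least one critical vertex, since the weak Morse inequality gives $c_0 \ge b_0 \ge 1$; so, reading the hypothesis, the cells of $X$ are these $m$ cells in dimension $d$ together with the critical $0$-cells, and there are no cells in the intermediate dimensions $1, \dots, d-1$. Since a wedge of $d$-spheres with $d \ge 1$ is connected, I would also record that $K$, hence $X$, is connected.

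Next I would analyze the attaching maps. As $X$ has no cells in dimensions $1$ through $d-1$, its $(d-1)$-skeleton equals its $0$-skeleton, a discrete set of points. For $d \ge 2$ the attaching sphere $S^{d-1}$ of each top cell is connected, so every attaching map $S^{d-1} \to X^{(d-1)}$ hits a single vertex; connectedness of $X$ then forces exactly one $0$-cell $x_0$, and each $d$-cell is glued by the unique (constant) map to $x_0$. Collapsing $\partial D^d$ to $x_0$ identifies each disk with $D^d/\partial D^d \cong S^d$, so $X$ is literally the wedge $\bigvee_{i=1}^{m} S^d$. In the remaining case $d = 1$, the complex $X$ is a connected graph; collapsing a spanning tree, which is contractible, leaves a single vertex carrying $c_1 - c_0 + 1$ loops, so once more $X \simeq \bigvee S^1$. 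Either way $K \simeq X$ is a wedge of $d$-spheres, as claimed.

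The step that needs the most care is the bookkeeping of the $0$-dimensional critical cells, which the terse hypothesis ``only critical faces of dimension $d$'' glosses over: a nonempty complex cannot avoid a critical vertex, so the real work is to argue that up to homotopy one may assume a single $0$-cell. For $d \ge 2$ this is automatic, since connected attaching spheres cannot join distinct vertices and $K$ is connected; for $d = 1$ it is the classical fact that a connected graph deformation retracts, after a spanning-tree collapse, onto a wedge of circles. Once the cell structure is normalized to one $0$-cell plus $m$ cells in dimension $d$, the wedge-of-spheres conclusion is forced by the triviality of maps into a point, which is the formal heart of the argument.
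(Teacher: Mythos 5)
Your overall route is the intended one: the paper offers no independent proof of this statement (it cites \cite{J} and remarks that it ``follows from the theorem above,'' i.e.\ from Theorem~\ref{forth}), and your derivation --- Forman's theorem produces a CW complex concentrated in dimensions $0$ and $d$, and constant attaching maps then yield a wedge --- is exactly the derivation being gestured at. However, there is a genuine gap in your handling of the $0$-cells. Your proof needs $K$ to be connected (that is what forces a single $0$-cell), and the only justification you give is circular: you assert that $K$ is connected ``since a wedge of $d$-spheres with $d\ge 1$ is connected,'' which is inferring a property of $K$ from the very conclusion you are trying to establish. The issue is not cosmetic. Under your reading of the hypothesis --- ``the critical cells are the $m$ cells of dimension $d$ together with however many critical $0$-cells Morse theory forces'' --- the statement is simply false: take $K$ to be the disjoint union of two triangle boundaries. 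Each component admits an acyclic matching with one critical vertex and one critical edge, so all positive-dimensional critical cells have dimension $1$, yet $K \simeq S^1 \sqcup S^1$ is not a wedge of circles. So connectedness cannot be recovered from your version of the hypothesis; it has to come from reading the hypothesis correctly.

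The correct reading is the convention the paper sets up in Section~2.3: the empty simplex participates in the matching, and a lone unpaired $0$-simplex may be paired with $\emptyset$. With that convention, ``only critical faces of dimension $d$'' means that in Forman's sense there is \emph{exactly one} critical vertex (the one absorbed by $\emptyset$) and $m$ critical $d$-cells, and nothing else. Theorem~\ref{forth} then gives a CW complex with a single $0$-cell and $m$ cells of dimension $d$, at which point your attaching-map analysis (constant maps for $d\ge 2$; loops at the unique vertex for $d=1$) closes the argument with no appeal to connectedness needed. Equivalently, one can phrase this homologically: matching $\emptyset$ amounts to running Morse theory on the augmented chain complex, and the reduced Morse inequality $\tilde{b}_0(K) \le \tilde{c}_0 = 0$ is what legitimately proves $K$ is connected. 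Either repair is short, but as written your proof assumes what it needs to prove at the one step you yourself identified as the delicate one.
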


\subsection{Collapsibility of $\mathcal{A}_x$}

We consider the case $U= \{x \}$. Grinberg produced an acyclic matching on $\mathcal{A}_x$ with no critical simplices (here we adopt the convention that if only one $0$-simplex is unpaired in some matching, we are allowed to pair it with $\emptyset$). Then, we conclude by Theorem~\ref{forth} that $\mathcal{A}_x$ is collapsible.\\

We briefly explain the matching $\Phi_0$ constructed in Grinberg's proof. We fix an arbitrary ordering in $E(G)$. For any set $F \in \mathcal{A}_x $ define, $\sigma(F)$ to be the minimum edge in $E(G) \setminus \operatorname{Shade}_x(F)$ (this set is non-empty by definition). If $ \sigma(F) \notin F$, we pair off $F$ with $F \cup \sigma(F)$. This gives an acyclic matching in $\mathcal{A}_x $ with no critical simplices \cite{Grinberg}. Thus we have the following theorem.  

\begin{theorem} \cite{Grinberg}
For any $x\in V(G)$, $\mathcal{A}_x$ is collapsible.
\end{theorem}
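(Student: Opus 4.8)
The plan is to verify that the matching $\Phi_0$ described above is an acyclic gradient vector field on $\mathcal{A}_x$ with no critical simplices, and then invoke Forman's Theorem~\ref{forth} to conclude collapsibility. Before anything else I would record two elementary facts. First, $\mathcal{A}_x$ is genuinely a simplicial complex: $\mathrm{Shade}_x$ is monotone under inclusion (more edges only create more $F$-paths to $x$), so $F' \subseteq F$ together with $\mathrm{Shade}_x(F) \subsetneq E(G)$ force $\mathrm{Shade}_x(F') \subseteq \mathrm{Shade}_x(F) \subsetneq E(G)$, whence $F' \in \mathcal{A}_x$. Second, for every $F \in \mathcal{A}_x$ the set $E(G) \setminus \mathrm{Shade}_x(F)$ is nonempty, so $\sigma(F)$ is well defined.

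The crucial ingredient is the following invariance lemma: writing $e_0 = \sigma(F)$, I claim $\mathrm{Shade}_x(F \cup \{e_0\}) = \mathrm{Shade}_x(F) = \mathrm{Shade}_x(F \setminus \{e_0\})$. The point is that $e_0 \notin \mathrm{Shade}_x(F)$ means there is no $F$-path from either endpoint of $e_0$ to $x$; consequently no path to $x$ in $F \cup \{e_0\}$ can use $e_0$ at all, since the portion of such a path after its last traversal of $e_0$ would itself be an $F$-path from an endpoint of $e_0$ to $x$. Hence toggling $e_0$ neither creates nor destroys any path to $x$, leaving $\mathrm{Shade}_x$, and therefore $\sigma$, unchanged. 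This is exactly what makes $\Phi_0$ a consistent matching: each $F$ with $\sigma(F) \notin F$ is paired upward with $F \cup \{e_0\}$, and by the lemma $\sigma(F \cup \{e_0\}) = e_0 \in F \cup \{e_0\}$, so $F \cup \{e_0\}$ is paired back downward with exactly $F$; dually, each $F$ with $\sigma(F) \in F$ is the upper partner of $F \setminus \{\sigma(F)\}$. Thus every simplex is matched and there are no critical simplices.

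It remains to prove acyclicity, which I expect to be the main obstacle. Suppose $\alpha_0, \beta_0, \alpha_1, \beta_1, \dots$ is a $V$-path, so $\beta_i = \alpha_i \cup \{\sigma(\alpha_i)\}$ and $\alpha_{i+1} = \beta_i \setminus \{e_i\}$ for some $e_i \in \alpha_i$ with $e_i \neq \sigma(\alpha_i)$. I would track the quantity $\sigma(\alpha_i)$. Since $\alpha_{i+1} \subseteq \beta_i$ and $\mathrm{Shade}_x(\beta_i) = \mathrm{Shade}_x(\alpha_i)$ by the lemma, monotonicity gives $\mathrm{Shade}_x(\alpha_{i+1}) \subseteq \mathrm{Shade}_x(\alpha_i)$, so passing to complements and taking minima yields $\sigma(\alpha_{i+1}) \le \sigma(\alpha_i)$. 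For the $V$-path to continue past $\alpha_{i+1}$ one needs $\sigma(\alpha_{i+1}) \notin \alpha_{i+1}$; but $\sigma(\alpha_i) \in \alpha_{i+1}$, having just been added, so $\sigma(\alpha_{i+1}) \neq \sigma(\alpha_i)$ and the inequality is strict: $\sigma(\alpha_{i+1}) < \sigma(\alpha_i)$. Hence $\sigma(\alpha_0) > \sigma(\alpha_1) > \cdots$ strictly decreases along any $V$-path, which rules out a nontrivial closed path $\alpha_0 = \alpha_{r+1}$. By Theorem~\ref{v-path criteria} the matching is a gradient vector field, and having no critical simplices it makes $\mathcal{A}_x$ collapsible via Theorem~\ref{forth}. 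The one delicate point to handle carefully is the strictness step together with the bookkeeping in dimension $0$, where the stated convention of pairing a lone unmatched $0$-simplex with $\emptyset$ absorbs the only potential exception.
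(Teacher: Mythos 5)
Your proof is correct and follows essentially the same route as the paper: you verify that the very matching $\Phi_0$ the paper describes (pairing $F$ with $F \cup \{\sigma(F)\}$) is a well-defined acyclic matching with no critical simplices, and then apply Forman's theorem. The paper itself simply cites Grinberg for these verifications, so your shade-invariance lemma and the strictly decreasing $\sigma(\alpha_i)$ argument supply precisely the details that the paper delegates to the reference.
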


\section{Proof of the main theorem}

\begin{proof}[Proof of Theorem \ref{maintheorem}]

Let $E(G)=\{ e_1, e_2,\cdots, e_m \}$. We fix the ordering $ e_1 < e_2 < \cdots < e_m $ on $E(G)$. The proof is by induction. For the base case, let $U=\{x,y\}.$ Clearly, $\mathcal{A}_U = \mathcal{A}_x \cup \mathcal{A}_y.$ We consider the acyclic matching  $\Phi_0$ on $\mathcal{A}_x$ with no critical simplices as mentioned before. Our aim is to extend this matching $\Phi_0$ to the whole simplicial complex $\mathcal{A}_U.$ In order to achieve our goal we proceed recursively. We divide this recursive process into several steps. Note that we want to define an acyclic matching on the simplices of $\mathcal{A}_y \setminus\mathcal{A}_x.$ First, we make the following observation to characterize the simplices in $\mathcal{A}_y \setminus\mathcal{A}_x$. 

\begin{obs} 
 Let $\sigma \subseteq E(G)$ and $\sigma \neq \emptyset$. Then, $ \sigma \in \mathcal{A}_y \setminus \mathcal{A}_x$ if and only if the following conditions hold 
 \begin{enumerate}
 \item $G_\sigma$ is a connected subgraph of $G$,
 \item $V_\sigma$ is a vertex cover of $G$,
 \item $x \in V_\sigma$ and  $y \notin V_\sigma$.
 
 \end{enumerate}
Also, $\emptyset \in \mathcal{A}_y \setminus\mathcal{A}_x$ if and only if $G$ is a star graph with $x$ as the central vertex.
\end{obs}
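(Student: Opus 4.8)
The plan is to first translate the topological membership condition for $\mathcal{A}_x$ into a purely combinatorial statement about connected components. Writing $C_x(F)$ for the vertex set of the connected component of $x$ in the graph $(V(G),F)$ (so that $C_x(F)=\{x\}$ when $x$ is incident to no edge of $F$), I claim that an edge $e=\{a,b\}$ lies in $\mathrm{Shade}_x(F)$ precisely when $a\in C_x(F)$ or $b\in C_x(F)$. The only mild subtlety here is the trivial $F$-path from $x$ to itself, which forces every edge incident to $x$ into the shade even when $x$ is $F$-isolated; this is exactly accounted for by allowing $C_x(F)=\{x\}$. It follows that $\mathrm{Shade}_x(F)=E(G)$ if and only if $C_x(F)$ is a vertex cover of $G$, and hence $F\in\mathcal{A}_x$ if and only if $C_x(F)$ is \emph{not} a vertex cover of $G$. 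This dictionary converts the entire observation into elementary statements about components and covers.

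With this reformulation in hand, I would prove the forward direction as follows. Assume $\sigma\neq\emptyset$ and $\sigma\in\mathcal{A}_y\setminus\mathcal{A}_x$, so that $C_x(\sigma)$ is a vertex cover while $C_y(\sigma)$ is not. First, $x\in V_\sigma$: otherwise $C_x(\sigma)=\{x\}$ would be a vertex cover, forcing $G$ to be a star centered at $x$, but then any edge of the nonempty $\sigma$ is incident to $x$, contradicting $x\notin V_\sigma$. Next, $G_\sigma$ is connected: if $G_\sigma$ had a component $D\neq C_x(\sigma)$, then $D$ would contain an edge of $\sigma$ (every vertex of $V_\sigma$ is covered by $\sigma$), both of whose endpoints lie outside $C_x(\sigma)$, contradicting that $C_x(\sigma)$ is a vertex cover. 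Thus $C_x(\sigma)=V_\sigma$, which gives conditions (1) and (2). Finally $y\notin V_\sigma$, since $y\in V_\sigma=C_x(\sigma)$ would yield $C_y(\sigma)=C_x(\sigma)$, a vertex cover, contradicting $\sigma\in\mathcal{A}_y$; this gives (3).

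For the converse, assuming (1)--(3): connectivity of $G_\sigma$ together with $x\in V_\sigma$ gives $C_x(\sigma)=V_\sigma$, which is a vertex cover, so $\sigma\notin\mathcal{A}_x$; and $y\notin V_\sigma$ with $\sigma\neq\emptyset$ forces $C_y(\sigma)=\{y\}$ to fail to be a vertex cover (were it one, $G$ would be a star centered at $y$ and every edge of $\sigma$ would touch $y$), so $\sigma\in\mathcal{A}_y$. The empty-set clause is then immediate from the dictionary, since $C_x(\emptyset)=\{x\}$ and $C_y(\emptyset)=\{y\}$: from $\emptyset\notin\mathcal{A}_x$ we get that $\{x\}$ is a vertex cover, i.e. every edge is incident to $x$, so $G$ is a star centered at $x$. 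I expect the only real obstacle to be establishing the Shade-to-vertex-cover dictionary cleanly, paying attention to the trivial path at $x$ and to $F$-isolated vertices; once that is set up, each of the remaining implications is a one-line argument about connected components.
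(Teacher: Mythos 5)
Your proof is correct: the dictionary $F\in\mathcal{A}_x \iff C_x(F)$ is not a vertex cover (with the trivial path at $x$ handled as you do) is exactly the right unpacking of the definition of $\mathrm{Shade}_x$, and each of the subsequent component arguments is sound, including the star-graph clause for $\sigma=\emptyset$. The paper states this as an Observation with no proof at all, treating it as immediate from the definitions, so your write-up simply supplies the omitted details along the same implicit lines.
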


Now, we proceed to our initial step of the recursive process.

 \textbf{Step 1:} Consider those simplices $\sigma$ in $\mathcal{A}_y \setminus\mathcal{A}_x,$ such that, $e_1 \in \sigma$ and one of the following holds. 
 \begin{enumerate}
    \item $e_1$ is a part of a cycle (i.e., not a bridge) in $G_\sigma$ (see Fig. \ref{sigma1}).
    \item $e_1$ is a leaf-edge in $G_\sigma$ such that 
    \begin{enumerate}
        \item $e_1$ doesn't contain $x$ \emph{as a leaf} in $G_\sigma$ \\ and
        \item If  $z \in V(G) \setminus V_\sigma$, then $e_1$  doesn't contain any neighbor of $z$ \emph{as a leaf} in $G_\sigma$ (see Fig. \ref{sigma2}). 
\end{enumerate}         
\end{enumerate}

\begin{figure}[!ht]
\begin{minipage}{0.45\textwidth}
\[\begin{tikzpicture}[scale=1.5]
	\node at (-0.3,0) {$x$};
	\node at (2.3,0) {$y$};
	
	\node at (0.5,0.2) {$e_1$};
	\node at (1.2,-0.5) {$e_2$};
	\node at (0.2,-0.5) {$e_3$};
	
	\vertex (x) at (0,0) {};
	\vertex (w) at (1,0) {};
	\vertex (z) at (1,-1) {};
	
	\redvert (y) at (2,0) {};

	\path [line width=1.75pt]
		(x) edge (w)
		(x) edge (z)
		(w) edge (z)
	 ;  

	\path [gray,line width=1pt]
		(w) edge (y)
		(z) edge (y)
	;  

\end{tikzpicture}\]
\caption{ $\sigma = \{e_1,e_2,e_3\}$; $e_1$ is a part of a cycle in $G_\sigma$.} \label{sigma1}
\end{minipage}
\hfill
\begin{minipage}{0.45\textwidth}
	\[\begin{tikzpicture}[scale=1.5]
	\node at (-0.3,0) {$x$};
	\node at (1.3,1) {$y$};
	\node at (1.3,-1) {$z$};
	
	\node at (1.5,0.2) {$e_1$};
	\node at (0.5,0.2) {$e_2$};
	
	\vertex (x) at (0,0) {};
	\vertex (w1) at (1,0) {};
	\vertex (w2) at (2,0) {};
	
	\redvert (y) at (1,) {};
	\redvert (z) at (1,-1) {};
	
	\path [line width=1.75pt]
	(x) edge (w1)
	(w2) edge (w1)
	;  
	
	\path [gray,line width=1pt]
	(x) edge (y)
	(x) edge (z)
	(w1) edge (y)
	(w1) edge (z)
	;

	\end{tikzpicture}\]
	\caption{ $\sigma = \{e_1, e_2\}$; $e_1$ is a leaf-edge in $G_\sigma$ and $e_1$ doesn't contain any neighbor of $y$ or $z$ as a leaf.} \label{sigma2}
	\end{minipage}

\end{figure}

We note that in this case $\sigma \setminus \{ e_1 \} \in \mathcal{A}_y \setminus \mathcal{A}_x.$ We pair off $\sigma$ with $\sigma \setminus \{e_1\}.$ We mention an exceptional pairing in the special case of $ \{e_1 \} \in \mathcal{A}_y \setminus \mathcal{A}_x$ as well as $\emptyset \in \mathcal{A}_y \setminus \mathcal{A}_x$ (as mentioned before possible only if $G$ is a star graph with $x$ as the central vertex). In that case we pair off  $ \{e_1 \}$ with $\emptyset$.\\ 

We observe that conversely any $\sigma \in \mathcal{A}_y \setminus \mathcal{A}_x$, with $e_1 \notin \sigma$, is uniquely paired with $\sigma \cup \{ e_1 \}$ whenever $\sigma \cup \{ e_1 \} \in  \mathcal{A}_y \setminus \mathcal{A}_x$. Therefore, this extension of $\Phi_0$ is well defined and is denoted by $\Phi_1.$

\textbf{Step 2:} The set of all unpaired simplices after step $1$ is denoted by $\mathcal{C}_1.$ Note that $\mathcal{C}_1$ consists of simplices  $\tau$ in $\mathcal{A}_y \setminus\mathcal{A}_x,$ such that, either $e_1 \notin \tau$ (possible only if $y \in e_1$)  or $e_1 \in \tau$ and one of the following holds.

 \begin{enumerate}
    \item $e_1$  is a bridge but not a leaf-edge in $G_\tau$ (see Fig. \ref{sigma3}).
    \item $e_1$ is a leaf-edge in $G_\tau$ such that,
    \begin{enumerate}
         \item  $e_1$ contains $x$ as a leaf in $G_\tau$ (see Fig. \ref{sigma4}, \ref{sigma5})\\ or 
    \item  there exists $z \in V(G) \setminus V_\tau$ such that $e_1$  contains a neighbor of $z$ as a leaf in $G_\tau$ (see Fig. \ref{sigma5}, \ref{sigma6}). 

\end{enumerate}
\end{enumerate}

\begin{figure}[!ht]

\begin{minipage}{0.45\textwidth}
	\[\begin{tikzpicture}[scale=1.5]
	\node at (-0.3,0) {$x$};
	\node at (3.8,0) {$y$};
	
	\node at (0.5,0.2) {$e_2$};
	\node at (1.2,-0.5) {$e_5$};
	\node at (0.2,-0.5) {$e_4$};
	\node at (2,-0.5) {$e_1$};
	\node at (2.7,-0.5) {$e_3$};
	
	\vertex (x) at (0,0) {};
	\vertex (w1) at (1,0) {};
	\vertex (w2) at (2.5,-1) {};
	\vertex (w3) at (2.5,0) {};	
	\vertex (z) at (1,-1) {};
	
	\redvert (y) at (3.5,0) {};
	
	\path [line width=1.75pt]
	(x) edge (w)
	(x) edge (z)
	(w1) edge (z)
	(w1) edge (w2)
	(w3) edge (w2)
	;  
	
	\path [gray,line width=1pt]
	(w3) edge (y)
	(w2) edge (z)
	(w3) edge (w1)
	;

	\end{tikzpicture}\]
	\caption{ $\tau = \{e_1,\ldots,e_5\}$; $e_1$ is a bridge in $G_\tau$, but not a leaf-edge.}\label{sigma3}
\end{minipage}
\hfill
\begin{minipage}{0.45\textwidth}
		\[\begin{tikzpicture}[scale=1.5]
	\node at (-0.3,0) {$x$};
	\node at (2.3,0) {$y$};
	
	\node at (0.5,0.2) {$e_1$};
	\node at (1.2,-0.5) {$e_2$};
	
	\vertex (x) at (0,0) {};
	\vertex (w) at (1,0) {};
	\vertex (z) at (1,-1) {};
	
	\redvert (y) at (2,0) {};
	
	\path [line width=1.75pt]
	(x) edge (w)
	(w) edge (z)
	;  
	
	\path [gray,line width=1pt]
	(w) edge (y)
	(x) edge (z)
	;

	\end{tikzpicture}\]
	\caption{ $\tau = \{e_1,e_2\}$; $e_1$ is a leaf-edge in $G_\tau$ and $e_1$ contains $x$ as a leaf in $G_\tau$.}\label{sigma4}
	\end{minipage}
	
\end{figure}

\begin{figure}[!ht]
	
\begin{minipage}{0.45\textwidth}
	\[\begin{tikzpicture}[scale=1.5]
	\node at (-0.3,0) {$x$};
	\node at (2.3,0) {$y$};
	\node at (1.3,-1) {$z$};
	
	\node at (0.5,0.2) {$e_1$};
	
	\vertex (x) at (0,0) {};
	\vertex (w) at (1,0) {};
	
	\redvert (y) at (2,0) {};
	\redvert (z) at (1,-1) {};
	
	\path [line width=1.75pt]
	(x) edge (w)
	;  
	
	\path [gray,line width=1pt]
	(w) edge (y)
	(x) edge (z)
	(w) edge (z)
	;

	\end{tikzpicture}\]
	\caption{ $\tau = \{e_1\}$; $e_1$ is a leaf-edge in $G_\tau$ and $e_1$ contains $x$ as a leaf in $G_\tau$. Moreover, $e_1$ also contains neighbors of $y$ and $z$ as leaves.}\label{sigma5}
	\end{minipage}
\hfill
\begin{minipage}{0.45\textwidth}
	\[\begin{tikzpicture}[scale=1.5]
\node at (-0.3,0) {$x$};
\node at (3.3,0) {$y$};
\node at (1.3,-1) {$z$};

\node at (1.5,0.2) {$e_1$};
\node at (0.5,0.2) {$e_2$};

\vertex (x) at (0,0) {};
\vertex (w1) at (1,0) {};
\vertex (w2) at (2,0) {};

\redvert (y) at (3,0) {};
\redvert (z) at (1,-1) {};

\path [line width=1.75pt]
(x) edge (w1)
(w2) edge (w1)
;  

\path [gray,line width=1pt]
(w2) edge (y)
(x) edge (z)
(w1) edge (z)
(w2) edge (z)
;  
\end{tikzpicture}\]
\caption{ $\tau = \{e_1, e_2\}$; $e_1$ is a leaf-edge in $G_\tau$ and $e_1$ contains (some) neighbors of $y$ and $z$ as leaves.} \label{sigma6}
\end{minipage}	
	
\end{figure}

Similarly as before, we consider those simplices $\tau$ in $\mathcal{C}_1$ such that, $e_2$ is in $\tau$ and either a part of a cycle or a leaf-edge in $G_\tau$ such that $e_2$ doesn't contain $x$ as a leaf, and if $z \in V(G) \setminus V_\tau$ then $e_2$  doesn't contain any neighbor of $z$ as a leaf in $G_\tau$. We pair off $\tau$ with $\tau \setminus \{e_2\}.$ This extended matching is denoted by $\Phi_2.$

 As before, in the special case of $ \{e_2 \} \in \mathcal{C}_1$ and $\{e_2 \}, \emptyset \in \mathcal{A}_y \setminus \mathcal{A}_x$, we pair off $ \{e_2 \}$ with $\emptyset$.

\textbf{Step 3:} The set of all unpaired simplices after step $2$ is denoted by $\mathcal{C}_2.$ Note that $\mathcal{C}_2$ consists of simplices  $\eta$ in $\mathcal{C}_1$ if
one of the following four conditions holds:

 \begin{enumerate}
\item $e_2 \in \eta$ is a bridge but not a leaf-edge in $G_\eta$. 

\item $e_2 \in \eta$ is a leaf-edge in $G_\eta$ such that, 

    \begin{enumerate}
         \item  $e_2$ contains $x$ as a leaf  in $G_{\eta}$ \\ or
       \item  there exists $z \in V(G) \setminus V_\eta$ such that $e_2$  contains a neighbor of $z$ as a leaf in $G_\eta$. 

\end{enumerate}
\item $e_2 \notin \eta$ and $y \in e_2.$
\item $e_2 \notin \eta, y \notin e_2$ and $\eta \cup \{e_2\}$ contains a cycle containing $e_1.$

\end{enumerate}

Now we proceed as before, i.e., consider those simplices $\eta$ in $\mathcal{C}_2$ such that, $e_3$ is in $\eta$ and either  a part of a cycle or a leaf-edge in $G_\eta$ such that $e_3$ doesn't contain $x$ as a leaf, and if $z \in V(G) \setminus V_\eta$ then $e_3$  doesn't contain any neighbor of $z$ as a leaf in $G_\eta$. We pair off $\eta$ with $\eta \setminus \{e_3\}.$ This extended matching is denoted by $\Phi_3.$

Proceeding this way, we end up with a matching $\Phi$. To verify that $\Phi$ is acyclic, we first observe that at the $i$-th step, we pair a simplex $\Sigma$ with $\Sigma \setminus e_i$ only if $\operatorname{Shade}_x(\Sigma)=\operatorname{Shade}_x(\Sigma \setminus e_i)=E(G) $. More generally, we make the following observation.

\begin{obs} \label{pair}

We pair off   $\Sigma  \in \mathcal{A}_y \setminus \mathcal{A}_x$ with $\Sigma \setminus e$ only if $e$ is the minimal edge in $\Sigma$ such that $\operatorname{Shade}_x(\Sigma \setminus e)=E(G) $. 

\end{obs}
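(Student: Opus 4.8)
The plan is to translate the condition $\mathrm{Shade}_x(\Sigma\setminus e)=E(G)$ into a purely graph-theoretic statement about $G_\Sigma$, to show that for $\Sigma\in\mathcal A_y\setminus\mathcal A_x$ this statement is \emph{equivalent} to the pairing condition of Step~$1$ (hence of every step), and then to read off both assertions of the Observation from the order in which the recursion processes edges. The basic reformulation is as follows. For $F\subseteq E(G)$ let $C_x(F)\subseteq V(G)$ denote the vertex set of the connected component of $x$ in the graph $(V(G),F)$, so that $x\in C_x(F)$ always. An edge of $G$ lies in $\mathrm{Shade}_x(F)$ precisely when one of its endpoints lies in $C_x(F)$; consequently $\mathrm{Shade}_x(F)=E(G)$ if and only if $C_x(F)$ is a vertex cover of $G$. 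Since, by the characterisation above, $G_\Sigma$ is connected with $x\in V_\Sigma$ and $V_\Sigma$ a vertex cover, we get $C_x(\Sigma)=V_\Sigma$ and thus $\mathrm{Shade}_x(\Sigma)=E(G)$; the question becomes whether, for $e\in\Sigma$, the set $C_x(\Sigma\setminus e)$ is still a vertex cover.

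The key lemma I would prove is that, for $\Sigma\in\mathcal A_y\setminus\mathcal A_x$ and $e\in\Sigma$, one has $\mathrm{Shade}_x(\Sigma\setminus e)=E(G)$ if and only if $e$ satisfies the Step~$1$ condition, i.e. either $e$ is not a bridge of $G_\Sigma$, or $e$ is a leaf-edge whose leaf $b$ satisfies $b\neq x$ and has all of its $G$-neighbours inside $V_\Sigma$. This is a short case check. If $e$ is not a bridge, then $G_{\Sigma\setminus e}$ stays connected and both endpoints of $e$ keep degree at least $1$, so $C_x(\Sigma\setminus e)=V_\Sigma$ and the cover is preserved. If $e=\{a,b\}$ is a leaf-edge with leaf $b\neq x$, then deleting $e$ isolates $b$ while leaving the rest connected, so $C_x(\Sigma\setminus e)=V_\Sigma\setminus\{b\}$; as $V_\Sigma$ is a cover, this remains a cover exactly when every $G$-neighbour of $b$ already lies in $V_\Sigma$, which is condition~$(2)$ of Step~$1$. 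For the converse I would check the three ``bad'' possibilities. If $e$ is a bridge that is not a leaf-edge, then $G_{\Sigma\setminus e}$ splits into two components each of size at least $2$, and the one avoiding $x$ carries an edge of $\Sigma$ with both endpoints outside $C_x(\Sigma\setminus e)$, so the cover fails. If $e$ is a leaf-edge whose leaf is $x$, then deleting it isolates $x$ and $C_x(\Sigma\setminus e)=\{x\}$ is not a cover. Finally, if the leaf $b\neq x$ has a neighbour $z\in V(G)\setminus V_\Sigma$, then the edge $\{b,z\}$ is uncovered. In each bad case $\mathrm{Shade}_x(\Sigma\setminus e)\neq E(G)$.

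With the lemma, both halves of the Observation fall out of the recursion. Apart from the exceptional pairing discussed below, $\Sigma$ is paired with $\Sigma\setminus e_i$ at step $i$ only when $e_i\in\Sigma$ meets the Step-$i$ condition, which is the Step-$1$ condition for $e_i$; so the forward direction of the lemma gives $\mathrm{Shade}_x(\Sigma\setminus e_i)=E(G)$. For minimality, observe that $\Sigma$ reaches step $i$ unpaired, i.e. $\Sigma\in\mathcal C_{i-1}\subseteq\mathcal C_{j-1}$ for every $j<i$. Were some $e_j\in\Sigma$ with $j<i$ also to meet the Step-$j$ condition, then $\Sigma$ would already have been paired (with $\Sigma\setminus e_j$) at step $j$, contradicting that it is paired at the strictly later step $i$. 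Hence every $e_j\in\Sigma$ with $j<i$ fails its step condition, so the converse direction of the lemma gives $\mathrm{Shade}_x(\Sigma\setminus e_j)\neq E(G)$; thus $e_i$ is the minimal edge of $\Sigma$ whose deletion preserves $\mathrm{Shade}_x=E(G)$, as claimed.

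The step I expect to be the main obstacle is turning the phrase ``$\Sigma$ met the Step-$j$ condition but was not paired then'' into a genuine contradiction, which requires the recursion to be \emph{well defined}: whenever $\Sigma$ satisfies a step's pairing condition, its intended partner $\Sigma\setminus e_j$ must still be available and actually matched to it. I would secure this by an induction on the step number that maintains the explicit descriptions of $\mathcal C_1,\mathcal C_2,\dots$ recorded in the text, so that no partner is consumed out of turn, mirroring the uniqueness remark following Step~$1$. I would also dispose of the degenerate case in which $G$ is a star with centre $x$ separately: there the leaf-at-$x$ deletion does preserve the shade (indeed $\mathrm{Shade}_x(\emptyset)=E(G)$), so the lemma's converse needs this exclusion, and the Observation is reconciled through the exceptional pairing $\{e_1\}\leftrightarrow\emptyset$, for which $e_1$ is trivially the minimal qualifying edge.
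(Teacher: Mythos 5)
Your key lemma --- that for $\Sigma \in \mathcal{A}_y\setminus\mathcal{A}_x$ and $e\in\Sigma$ one has $\mathrm{Shade}_x(\Sigma\setminus e)=E(G)$ exactly when $e$ satisfies the step condition (a cycle edge, or a leaf-edge whose leaf is neither $x$ nor a neighbor of an uncovered vertex), with the star-at-$x$ case excluded --- is correct, and it is the right translation; note that the paper states this Observation with no proof at all, so your lemma supplies something the paper omits. The genuine gap is exactly at the point you flagged as ``the main obstacle'': your minimality argument needs the implication ``if $\Sigma\in\mathcal{C}_{j-1}$ and $e_j\in\Sigma$ meets the Step-$j$ condition, then $\Sigma$ is paired at step $j$,'' i.e.\ that the intended partner $\Sigma\setminus e_j$ is still unmatched when step $j$ arrives. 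You propose to secure this by an induction maintaining the descriptions of $\mathcal{C}_1,\mathcal{C}_2,\dots$, but no such induction can succeed, because the claim is false: monotonicity of $\mathrm{Shade}_x$ (plus your lemma) does rule out the partner having been matched \emph{downward} earlier, but the partner can be matched \emph{upward} at an earlier step, as the top element of a pair created by a different simplex.

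Concretely, take $V(G)=\{x,p,q,r,w,y\}$ and $E(G)=\{xp,pq,pr,qr,rw,ry\}$ (a triangle $pqr$ with pendants $x$ at $p$, and $w,y$ at $r$), with ordering $pr<qr<pq<rw<xp<ry$. Step 1 pairs $(\{xp,pq,pr,qr\},\{xp,pq,qr\})$ and $(\{xp,pq,pr,qr,rw\},\{xp,pq,qr,rw\})$, the only simplices in which $pr$ lies on a cycle. Step 2 pairs $(\{xp,pr,qr\},\{xp,pr\})$ and $(\{xp,pr,qr,rw\},\{xp,pr,rw\})$, since $qr$ is a leaf-edge at $q$ and $q$'s only neighbors $p,r$ are covered. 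Now at step 3, both $B=\{xp,pq,pr\}$ and $\Sigma=\{xp,pq,pr,rw\}$ are unmatched and satisfy the Step-3 condition ($pq$ is a leaf-edge at $q$, which is neither $x$ nor adjacent to $w$ or $y$), yet their intended partners $B\setminus pq=\{xp,pr\}$ and $\Sigma\setminus pq=\{xp,pr,rw\}$ were both consumed at step 2; so ``meets the condition $\Rightarrow$ gets paired'' fails. Worse, if the recursion is continued by skipping impossible pairings, step 4 pairs $\Sigma$ with $\Sigma\setminus rw=B$, and this pair violates the Observation itself: $pq<rw$, $pq\in\Sigma$, and $\mathrm{Shade}_x(\Sigma\setminus pq)=E(G)$. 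So the obstacle you identified is not a technical loose end to be discharged by bookkeeping; it is a genuine failure of the recursion as described in the paper (whose authors assert the Observation without proof). A correct treatment has to repair the matching rule itself --- for which your lemma would be the natural tool --- rather than verify that the stated recursion behaves as claimed.
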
 

We also make the following observations regarding a unpaired simplex $\Sigma$ in this matching $\Phi$.

\begin{obs}

If a simplex  $\Sigma \in \mathcal{A}_y \setminus \mathcal{A}_x $ is unpaired in $\Phi$, then the following properties hold.
\begin{enumerate}
	\item $G_\Sigma$ is a spanning tree of $G \setminus \{ y \}$. Therefore, it is a $(|V(G)|-3)$-simplex (i.e., it has $|V(G)|-2$ edges).
	\item For any $e_j \notin \Sigma$ with $y \notin e_j$, the unique cycle in $G_{\Sigma \cup e_j}$ has an edge $e_i$ such that $e_i<e_j$. 
\end{enumerate} 

\end{obs}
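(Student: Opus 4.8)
The plan is to study an unpaired (critical) simplex $\Sigma$ of $\Phi$ in $\mathcal{A}_y\setminus\mathcal{A}_x$ through two facts the construction already supplies: membership in $\mathcal{A}_y\setminus\mathcal{A}_x$, which by the characterizing Observation means $G_\Sigma$ is connected, $V_\Sigma$ is a vertex cover, $x\in V_\Sigma$ and $y\notin V_\Sigma$; and Observation~\ref{pair}, which says the only downward pairing of $\Sigma$ deletes the minimal edge $e\in\Sigma$ with $\mathrm{Shade}_x(\Sigma\setminus e)=E(G)$. I would first record the dictionary that an edge $e\in\Sigma$ is \emph{removable}, i.e.\ $\mathrm{Shade}_x(\Sigma\setminus e)=E(G)$, precisely when $e$ lies on a cycle of $G_\Sigma$ or is a leaf-edge whose leaf is neither $x$ nor a neighbour of an uncovered vertex; equivalently when $\Sigma\setminus e$ still lies in $\mathcal{A}_y\setminus\mathcal{A}_x$. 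These are exactly the edges the recursive steps may delete.

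For property (1) I would first show $G_\Sigma$ is acyclic: if it contained a cycle, every edge of that cycle would be removable, and a simplex having a removable edge is necessarily paired (downward at the step of its minimal removable edge, unless it was paired upward earlier), contradicting criticality. As $G_\Sigma$ is also connected it is a tree. The substantive point is that this tree spans $V(G)\setminus\{y\}$. Suppose not and let $z\neq y$ be uncovered; since $V_\Sigma$ is a vertex cover and $z\notin V_\Sigma$, every neighbour of $z$ lies in $V_\Sigma$, and in particular $z\not\sim y$. For a neighbour $u$ of $z$, setting $e=\{z,u\}$ and $\Sigma'=\Sigma\cup\{e\}$ produces an element of $\mathcal{A}_y\setminus\mathcal{A}_x$ in which $e$ is a good leaf-edge, hence removable, and the aim is to deduce that $\Sigma$ is paired upward with such a $\Sigma'$, contradicting criticality. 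Once spanning is established, $G_\Sigma$ is a spanning tree of $G\setminus\{y\}$ with $|V(G)|-2$ edges, so $\Sigma$ is a $(|V(G)|-3)$-simplex.

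For property (2) I would exploit that $G_\Sigma$ is now a spanning tree: for $e_j\notin\Sigma$ with $y\notin e_j$ both endpoints lie in $V_\Sigma=V(G)\setminus\{y\}$, so $G_{\Sigma\cup e_j}$ has a unique cycle $C$ through $e_j$, and moreover $V_{\Sigma\cup e_j}=V_\Sigma$. If $C$ had no edge smaller than $e_j$, I would check that $e_j$ is the minimal removable edge of $\Sigma\cup e_j$: the removable edges on $C$ are all $\geq e_j$ by assumption, while any removable edge off $C$ is a bridge of $\Sigma\cup e_j$, hence a leaf-edge with unchanged leaf and unchanged uncovered-neighbour data, so it would be removable already in $\Sigma$ — impossible, since the critical $\Sigma$ has no removable edge. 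Then Observation~\ref{pair} pairs $\Sigma\cup e_j$ downward to $\Sigma$, contradicting that $\Sigma$ is unpaired; hence $C$ contains an edge $e_i<e_j$.

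The main obstacle is the spanning claim inside (1). The tempting "add a pendant edge at the uncovered vertex $z$" move does not by itself produce a pairing: covering $z$ in $\Sigma'=\Sigma\cup\{e\}$ can turn a previously bad leaf-edge of $\Sigma$ into a good (hence removable) one of $\Sigma'$ that is smaller than $e$, so that $\Sigma'$ is paired downward through that smaller edge at an earlier step and the intended up-pairing of $\Sigma$ is preempted. The delicate work is therefore to choose $z$ and $u$ — presumably realizing the globally minimal relevant edge — for which $e=\{z,u\}$ is guaranteed to be the minimal removable edge of $\Sigma'$, so that $\Sigma'$ is genuinely matched with $\Sigma$; controlling this interaction with the fixed edge order is where I expect the real difficulty to lie.
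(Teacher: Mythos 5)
Your setup (the ``removable edge'' dictionary and its equivalence with staying inside $\mathcal{A}_y\setminus\mathcal{A}_x$) is correct, but the two arguments you actually complete both rest on the claim that an unpaired simplex of $\Phi$ has no removable edge --- ``a simplex having a removable edge is necessarily paired (downward at the step of its minimal removable edge, unless it was paired upward earlier)''. That claim is false for this recursive matching: the downward pairing at the step of the minimal removable edge can be blocked because the would-be partner was already consumed by an \emph{up}-pairing at an earlier step, while the simplex that would have absorbed $\Sigma$ upward was itself consumed even earlier. Concretely, take $V(G)=\{x,u,v,w,y\}$ and order $E(G)$ as $e_1=uv<e_2=xw<e_3=vw<e_4=xu<e_5=uy<e_6=vy$. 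Running $\Phi$: step $1$ pairs the $4$-cycle $\{e_1,e_2,e_3,e_4\}$ with $\{e_2,e_3,e_4\}$, and nothing else; step $2$ pairs $\{e_1,e_2,e_4\}$ with $\{e_1,e_4\}$ (in $\{e_1,e_2,e_4\}$ the edge $e_2$ has leaf $w\neq x$, and $w$ is not adjacent to the unique uncovered vertex $y$). Now $\Sigma=\{e_1,e_3,e_4\}$, the path $x$--$u$--$v$--$w$, has the removable edge $e_3$ (leaf $w$, $w\neq x$, $w\not\sim y$); yet at step $3$ its partner $\Sigma\setminus e_3=\{e_1,e_4\}$ is already matched, $\Sigma$ was never paired upward (its only candidate $\Sigma\cup e_2$ was taken at step $1$), and $e_4$ is not removable in $\Sigma$, so $\Sigma$ ends up critical while possessing a removable edge. (It is a spanning tree of $G\setminus\{y\}$, so the Observation itself survives; your lemma does not. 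Incidentally, the example also shows that the paper's pairing rule needs the proviso that the partner still be unmatched, and that Observation~\ref{pair} is only a necessary condition for a pairing, never a sufficient one.) This breaks your acyclicity argument for property (1), breaks verbatim the step of your property (2) argument reading ``impossible, since the critical $\Sigma$ has no removable edge'', and invalidates the final move ``Then Observation~\ref{pair} pairs $\Sigma\cup e_j$ downward to $\Sigma$'', since one must additionally rule out that $\Sigma\cup e_j$ was paired, up or down, at an earlier step.

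The remaining piece, the spanning claim in property (1), you candidly leave open, and it is the heart of the matter: the preemption phenomenon you describe there is exactly the phenomenon exhibited above, and it infects every part of the statement, not just the spanning claim. So none of the three assertions is actually established by the proposal. For comparison, the paper offers no proof of this Observation at all --- it is asserted as evident from the construction --- so you are attempting to fill a real gap; but closing it requires an analysis of which simplices actually get matched over the entire run of the recursion (for instance, a closed-form description of the sets $\mathcal{C}_i$, or an induction showing directly that any survivor is a spanning tree of $G\setminus\{y\}$ satisfying (2)), not local removability arguments alone.
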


First, we consider the following alternating path with respect to the matching  $\Phi$ starting from $\sigma_1 \in \mathcal{A}_y \setminus \mathcal{A}_x$.

	\begin{tikzcd}
	& \Sigma_1 \arrow[rd]  & & \Sigma_2 \arrow[rd] && \Sigma_3 & \cdots & \Sigma_{k-1} \arrow[rd]
	\\
		\sigma_1 \arrow[ru] & & \sigma_1 \arrow[ru] & & \sigma_2 \arrow[ru] && \cdots & & \sigma_{k}   
	\end{tikzcd}
	
	Suppose $i$ is an integer  between $2$ and $k-1$ such that  $\sigma_i \in \mathcal{A}_x$. Then, we observe that $\sigma_j \in \mathcal{A}_x$ for all $j \in \{i, \cdots, k \}$ and  $\Sigma_{j} \in \mathcal{A}_x$ for all $j \in \{i, \cdots, k-1 \}$. It is already known that the matching $\Phi_0$ is acyclic in $\mathcal{A}_x$. Therefore, it is impossible to have closed $\Phi$-path containing simplices from both $\mathcal{A}_y \setminus \mathcal{A}_x$ and $\mathcal{A}_x$. So, it suffices to show that no closed $\Phi$-path exists containing simplices only from $\mathcal{A}_y \setminus \mathcal{A}_x$.\\

Now assume on the contrary that there is a directed cycle in the matching as follows.

	\begin{tikzcd}
		\Sigma_n \arrow[rd] & & \Sigma_1 \arrow[rd] & & \Sigma_2 \arrow[rd] && \cdots & & \Sigma_n   \\
		 & \sigma_1 \arrow[ru]  & & \sigma_2 \arrow[ru] && \sigma_3 & \cdots & \sigma_n \arrow[ru]
	\end{tikzcd}

Here, we assume $\Sigma_i, \sigma_i \in \mathcal{A}_y \setminus \mathcal{A}_x$ for each $i$. We note that for each $i$ there are edges $f(i) \notin \sigma_i$ and $g(i) \in \Sigma_i$, such that $\Sigma_i=\sigma_i \cup f(i)$ and $\sigma_{i+1}=\Sigma_i \setminus g(i)$.  
 It now follows that, $f(i)<g(i)$ (with respect to the ordering in $E(G)$) since otherwise it will violate Observation~\ref{pair} . We also assume without loss of generality that $f(1)= \min_{i} f(i) $. Since, $\{f(i): i=1,\cdots,n\}=\{g(i): i=1,\cdots,n\}$, there is $1 \le j \le n$ such that $f(1)=g(j)>f(j)$  which contradicts the fact that $f(1)$ is the minimum. Therefore, the matching $\Phi$ is an acyclic extension of the acyclic matching $\Phi_0$ for $\mathcal{A}_x$.

Now, let $U_n= \{ x_1,\cdots,x_n \}$. We observe that $\mathcal{A}_{U_n}=\mathcal{A}_{U_{n-1}} \cup  (\mathcal{A}_{x_n} \setminus \mathcal{A}_{U_{n-1}})$. Proceeding inductively, we construct an acyclic matching of $\mathcal{A}_U$ where $|U|>1$ that has critical simplices at dimension $|V(G)|-3$. 
 
By Theorem \ref{forth}, we conclude that $\mathcal{A}_U$ is homotopy equivalent to a wedge of spheres of dimension $|V(G)|-3$. Since, $\Delta^{G}_{U}$ is the Alexander dual of $\mathcal{A}_U$, it follows by Alexander duality that $\tilde{H}_k(\Delta^{G}_{U}) \cong \tilde{H}^{|E(G)|-k-3}(\mathcal{A}_U) \cong \tilde{H}_{|E(G)|-k-3}(\mathcal{A}_U) \cong 0 $ unless $|E(G)|-k-3=|V(G)|-3$. Therefore, $\tilde{H}_{k}(\Delta^{G}_{U};\mathbb{R}) \cong 0$ unless $k=|E(G)|-|V(G)|$.

\end{proof}

\end{document}